\documentclass[a4paper,12pt]{article}
\usepackage{amsmath,amsfonts,amssymb,amscd}
\usepackage{indentfirst,graphicx,epsfig}
\usepackage{graphicx,psfrag}
\usepackage{amsthm, amsfonts, color}
\usepackage[bookmarksnumbered, plainpages]{hyperref}
\usepackage[numbers,sort&compress]{natbib}
\input{epsf}

\textheight=22cm \textwidth=16cm \topmargin=-0.1cm
\oddsidemargin=0cm \evensidemargin=0cm

\title{On graphs with maximum Harary spectral radius\thanks{Supported by NSFC No.11371205 and PCSIRT.} }
\author{\small{Fei Huang, Xueliang Li, Shujing Wang}\\
{\small  Center for Combinatorics and LPMC-TJKLC}\\
{\small Nankai University, Tianjin 300071, China}\\\makeatletter
{\small Email: huangfei06@126.com; lxl@nankai.edu.cn;  \newcommand\figcaption{\def\@captype{figure}\caption}
wang06021@126.com} }  \newcommand\tabcaption{\def\@captype{table}\caption}
\date{}\makeatother
\textheight=22cm \textwidth=16cm \topmargin=-0.1cm
\oddsidemargin=0cm \evensidemargin=0cm
\newtheorem{theorem}{Theorem}[section]
\newtheorem{lemma}[theorem]{Lemma}
\newtheorem{coro}[theorem]{Corollary}

 \begin{document}

\maketitle

\begin{abstract}
Let $G$ be a simple graph with vertex set $V(G) = \{v_1 ,v_2 ,\cdots ,v_n\}$.
The Harary matrix $RD(G)$ of $G$, which is initially called the reciprocal distance matrix,
is an $n \times n$ matrix whose $(i,j)$-entry is equal to $\frac{1}{d_{ij}}$
if $i\not=j$ and $0$ otherwise, where $d_{ij}$ is the distance of $v_i$ and $v_j$ in $G$.
In this paper, we characterize graphs with maximum spectral radius of Harary matrix in
three classes of simple connected graphs with $n$ vertices: graphs with
fixed matching number, bipartite graphs with fixed matching number,
and  graphs with given number of cut edges, respectively.\\[2mm]
\noindent{\bf Keywords:} Harary matrix, Harary spectral radius, matching number, cut edge.

\noindent{\bf AMS subject classification 2010:} 05C50, 15A18, 92E10

\end{abstract}

\section{Introduction}

In this paper we are concerned with simple finite graphs. Undefined notation and terminology can be found in \cite{Bondy}. Let $G$ be a simple graph with vertex set $V(G) = \{v_1, v_2,\ldots, v_n\}$ and
edge set $E(G)$. Let $N_G(v)$ be the neighborhood of the vertex $v$ of $G$, and $d_{ij} $ be  the distance (i.e., the number of edges of a shortest path) between the vertices $v_i$ and $v_j$ in $G$.

The Harary
matrix $RD(G)$ of $G$, which is initially called the reciprocal distance matrix, is an $n \times n$ matrix $(RD_{ij} )$ such that
\begin{equation*}
RD_{ij}=
\left\{
  \begin{array}{ll}
   \frac{1}{d_{ij}} & \hbox{ if $i \neq j$,} \\
    0 & \hbox{otherwise}.
  \end{array}
\right.
\end{equation*}

As we know, in many instances the distant atoms influence each other much less than near atoms.  Harary matrix was introduced by Ivanciuc et al.\cite{Ivan} as an important molecular matrix to research this interaction, and it  was also  successfully used in a study concerning computer generation of acyclic graphs based on local vertex invariants and topological indices. Moreover, It is shown that the Harary spectral radius is able to produce fair QSPR models for the boiling points, molar heat capacities, vaporization enthalpies, refractive indices
and densities for $C_6$-$C_{10}$ alkanes.

Ivanciuc et al. \cite{Ivan1}
proposes to use the maximum eigenvalues of distance-based matrices as structural descriptors.
The lower and upper bounds of the maximum eigenvalues of Harary matrix, and the Nordhaus-Gaddum-type results for it were obtained in \cite{Zhou, Das}. Mathematical properties and applications of Harary index are reported in \cite{1Das,1Estrada,1Feng,1Xu,2Xu,1Zhou,2Zhou}. Some lower and upper bounds for Harary energy of
connected $(n,m)$-graphs were obtain in \cite{Gungor}.

A matching in a graph is a set of pairwise nonadjacent edges.
A maximum matching is one which covers as many vertices as possible.
The number of edges in a maximum matching of a graph $G$ is
called the matching number of $G$ and denoted by $\alpha'(G)$.
In this paper we characterize graphs with maximum spectral radius of Harary matrix in three classes of simple connected graphs with $n$ vertices: graphs with fixed matching number, bipartite graphs with fixed matching number, and  graphs with given number of cut edges, respectively.

\section{Preliminaries}
Since $RD$ is a real symmetric matrix, its eigenvalues are all real. Let
 $\rho(G)$ be the spectral radius of $RD(G)$, called Harary spectral radius. By the Perron-Frobenius theorem, the Harary spectral radius of a connected graph $G$ corresponds  to a unique positive unit eigenvector $X=(x_1, x_2, \cdots ,x_n)^T$,
  called principal eigenvector of $RD(G)$.
  Then
  \begin{equation}\label{1}
    \rho(G) x_i=\sum_{j\neq i}\frac{1}{d_{ij}}x_j.
  \end{equation}

The  following lemma is an immediate consequence of Perron-Frobenius Theorem.

\begin{lemma}\label{lem1}
Let $G$ be a connected graph with $u, v \in  V (G)$  and
$uv \notin  E(G)$. Then $\rho(G)<\rho(G + uv)$.
\end{lemma}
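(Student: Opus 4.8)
The plan is to compare the two Harary matrices entrywise and then exploit the Rayleigh characterization of the spectral radius together with the positivity of the Perron eigenvector. First I would observe that inserting the edge $uv$ into $G$ cannot increase the distance between any pair of vertices: every $v_i$--$v_j$ path in $G$ remains a path in $G+uv$, so $d_{ij}(G+uv)\le d_{ij}(G)$ for all $i\neq j$. Moreover, since $uv\notin E(G)$ and $G$ is connected we have $d_{uv}(G)\ge 2$, while $d_{uv}(G+uv)=1$. Hence $RD(G+uv)\ge RD(G)$ entrywise, with strict inequality precisely in the $(u,v)$ and $(v,u)$ positions.

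Next I would bring in the principal eigenvector. Let $X=(x_1,\dots,x_n)^T$ be the principal eigenvector of $RD(G)$; by the Perron--Frobenius theorem recorded above, $X$ is a \emph{positive} unit vector with $X^{T}RD(G)X=\rho(G)$. Because $RD(G+uv)$ is a nonnegative symmetric matrix, the Rayleigh quotient bound gives $\rho(G+uv)\ge X^{T}RD(G+uv)X$.

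Finally I would estimate the gain at the altered entry:
\[
X^{T}RD(G+uv)X-\rho(G)=\sum_{i\neq j}\bigl(RD(G+uv)_{ij}-RD(G)_{ij}\bigr)x_i x_j\ \ge\ 2\Bigl(1-\tfrac{1}{d_{uv}(G)}\Bigr)x_u x_v\ >\ 0,
\]
where the last strict inequality uses $d_{uv}(G)\ge 2$ together with $x_u,x_v>0$. Combining this with the Rayleigh bound yields $\rho(G+uv)>\rho(G)$.

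There is no serious obstacle here; the statement is essentially a one-line consequence of monotonicity. The only two points that genuinely need to be invoked are (i) that graph distance is nonincreasing under edge addition, so the comparison $RD(G+uv)\ge RD(G)$ holds, and (ii) that the Perron eigenvector of the Harary matrix of a connected graph has all coordinates strictly positive, which is what prevents the strict improvement at the $(u,v)$-entry from being cancelled elsewhere.
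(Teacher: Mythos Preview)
Your argument is correct and is precisely the standard elaboration of what the paper records in a single line: the paper does not write out a proof at all, stating only that the lemma is ``an immediate consequence of Perron--Frobenius Theorem.'' Your entrywise comparison $RD(G+uv)\ge RD(G)$ together with the Rayleigh bound and the strict positivity of the Perron eigenvector is exactly how one unpacks that remark, so there is no substantive difference in approach.
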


Let $G$ be a connected graph, and $H$ a subgraph of $G$. We know that
$H$ can be obtained from $G$ by deleting edges, and possibly vertices.

\begin{coro}\label{co1}
 If $H$ is a subgraph of a connected graph $G$, then $\rho(H)<\rho(G)$.
\end{coro}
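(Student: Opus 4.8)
The plan is to reduce the statement to a single Perron--Frobenius comparison between $RD(H)$ and $RD(G)$. Set $n=|V(G)|$ and $m=|V(H)|$, and index the $m\times m$ matrix $RD(H)$ by $V(H)\subseteq V(G)$. Embed it into an $n\times n$ matrix $A$ indexed by $V(G)$ by setting $A_{uv}=RD(H)_{uv}$ for $u,v\in V(H)$ and $A_{uv}=0$ otherwise (if $H$ should be disconnected we use the convention $1/\infty=0$, so that $RD(H)$ still makes sense). The added rows and columns of $A$ are zero, so the characteristic polynomial of $A$ is a power of $\lambda$ times that of $RD(H)$, whence $\rho(A)=\rho(H)$.

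First I would check that $0\le A\le RD(G)$ entrywise: for $u,v\in V(H)$ a shortest $u$--$v$ path in $H$ is also a path in $G$, so $d_G(u,v)\le d_H(u,v)$ and hence $A_{uv}=\frac{1}{d_H(u,v)}\le\frac{1}{d_G(u,v)}=RD(G)_{uv}$, while for the remaining entries $A_{uv}=0\le RD(G)_{uv}$. Next I would observe that $A\neq RD(G)$ precisely because $H$ is a proper subgraph of $G$: if $V(H)\subsetneq V(G)$ then a row of $A$ indexed by a deleted vertex is identically zero while the corresponding row of $RD(G)$ is positive; and if $V(H)=V(G)$ then there is an edge $uv\in E(G)\setminus E(H)$ with $A_{uv}=\frac{1}{d_H(u,v)}\le\frac12<1=RD(G)_{uv}$. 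Finally I would invoke the standard strengthening of the Perron--Frobenius theorem, in the same spirit as Lemma~\ref{lem1}: if $B$ is an irreducible nonnegative matrix and $0\le A\le B$ with $A\neq B$, then $\rho(A)<\rho(B)$. Since $G$ is connected, every off-diagonal entry $1/d_{ij}$ of $RD(G)$ is positive, so $RD(G)$ is irreducible; therefore $\rho(H)=\rho(A)<\rho(RD(G))=\rho(G)$.

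There is no real obstacle here; the only points deserving attention are the degenerate case in which $H$ is disconnected --- handled by the convention $1/\infty=0$, which keeps $A$ entrywise below $RD(G)$ --- and isolating the precise strict form of spectral-radius monotonicity, where the irreducibility of $RD(G)$ is exactly what is used. As an alternative for the frequently used special case in which $H$ is a connected spanning subgraph of $G$, one may instead add the edges of $E(G)\setminus E(H)$ one at a time: every intermediate graph contains the connected graph $H$, hence is connected, and Lemma~\ref{lem1} applies at each step to give $\rho(H)<\rho(G)$.
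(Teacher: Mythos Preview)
Your argument is correct. The single entrywise comparison $0\le A\le RD(G)$, together with the irreducibility of $RD(G)$ and the strict form of Perron--Frobenius monotonicity, gives $\rho(H)=\rho(A)<\rho(RD(G))=\rho(G)$ in one stroke; your treatment of the disconnected case via $1/\infty=0$ and your verification that $A\neq RD(G)$ when $H$ is a proper subgraph are both sound. (Implicitly you are reading the corollary with $H$ a \emph{proper} subgraph, which is indeed how the paper uses it.)

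The paper, by contrast, does not really write a proof: it merely remarks that $H$ is obtained from $G$ by deleting edges and possibly vertices, and regards the corollary as an immediate consequence of Lemma~\ref{lem1}. That iterative viewpoint is essentially what you record at the end as the ``alternative'' route for connected spanning subgraphs. Your main argument is genuinely different and in fact tidier: it handles vertex deletions and disconnected $H$ uniformly, and it sidesteps the need to keep intermediate graphs connected so that Lemma~\ref{lem1} applies at each step. The paper's approach is shorter to state but leaves those bookkeeping issues to the reader; yours makes the Perron--Frobenius input explicit and is self-contained.
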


  \begin{lemma}\label{lem2}
  Let $G$ be a connected graph with $v_r,v_s\in V(G)$.
  If $N_G(v_r)\setminus \{v_s\} = N_G(v_s)\setminus \{v_r\}$, then $x_r=x_s$.
  \end{lemma}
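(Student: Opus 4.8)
The plan is to show first that the twin hypothesis forces $d_{rj}=d_{sj}$ for every vertex $v_j$ with $j\notin\{r,s\}$, and then to read $x_r=x_s$ off the eigenvalue relation \eqref{1}. For the distance claim, fix such a $v_j$, take a shortest $v_r$--$v_j$ path $P$ (of length $d_{rj}$), and let $u$ be its second vertex, i.e.\ the neighbour of $v_r$ lying on $P$. If $u\neq v_s$, then $u\in N_G(v_r)\setminus\{v_s\}=N_G(v_s)\setminus\{v_r\}$, so $u$ is adjacent to $v_s$ as well, and replacing the initial edge $v_ru$ of $P$ by $v_su$ gives a $v_s$--$v_j$ walk of length $d_{rj}$; hence $d_{sj}\le d_{rj}$. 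If $u=v_s$ --- which can happen only when $v_rv_s\in E(G)$ --- then deleting the first edge of $P$ gives a $v_s$--$v_j$ walk of length $d_{rj}-1$, and again $d_{sj}\le d_{rj}$. By the symmetric argument $d_{rj}\le d_{sj}$, so $d_{rj}=d_{sj}$ for all $j\neq r,s$.

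Next I would write \eqref{1} at $i=r$ and at $i=s$, isolating in each the term indexed by the other of the two vertices:
\[
\rho(G)\,x_r=\frac{1}{d_{rs}}\,x_s+\sum_{j\neq r,s}\frac{1}{d_{rj}}\,x_j,
\qquad
\rho(G)\,x_s=\frac{1}{d_{rs}}\,x_r+\sum_{j\neq r,s}\frac{1}{d_{sj}}\,x_j .
\]
Since $d_{rs}=d_{sr}$ and, by the previous paragraph, $d_{rj}=d_{sj}$ for every $j\neq r,s$, the two sums coincide, so subtracting the equations leaves
\[
\Bigl(\rho(G)+\tfrac{1}{d_{rs}}\Bigr)(x_r-x_s)=0 .
\]
As $G$ is connected, $\rho(G)>0$ by Perron--Frobenius, and $1/d_{rs}>0$, so the scalar in front is nonzero; therefore $x_r=x_s$.

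The second step is just a one-line manipulation of \eqref{1}, so the only point requiring care is the distance argument in the case $v_rv_s\in E(G)$: a geodesic leaving $v_r$ may begin by stepping to $v_s$, so it cannot literally be ``transplanted'' onto $v_s$. This is precisely where the exclusion ``$\setminus\{v_s\}$'' in the hypothesis must be used, but the fix is immediate --- truncating the path only makes it shorter, which is still good enough for the inequality.
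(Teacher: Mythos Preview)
Your proof is correct and follows essentially the same approach as the paper's: write the eigenvalue relation \eqref{1} at $i=r$ and $i=s$, use the twin hypothesis to make the two sums over $j\neq r,s$ coincide, subtract, and conclude from $\rho(G)+1/d_{rs}\neq 0$. The only difference is that you supply a careful geodesic argument for the claim $d_{rj}=d_{sj}$ ($j\neq r,s$), which the paper simply asserts.
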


  \begin{proof}
  From Eq. \eqref{1}, we know that
  $$\rho(G) x_r=\sum_{j\neq  r}\frac{1}{d_{rj}}x_j= \frac{1}{d_{rs}}x_s+\sum_{j\neq s,r}\frac{1}{d_{rj}}x_j, $$

  and,
   $$\rho(G) x_s=\sum_{j\neq
  s}\frac{1}{d_{sj}}x_j= \frac{1}{d_{sr}}x_r+\sum_{j\neq
  s,r}\frac{1}{d_{sj}}x_j .$$
Since $N_G(v_r)\setminus \{v_s\} = N_G(v_s)\setminus \{v_r\}$,
  we have that  $d_{rj}=d_{sj}$ for $j\neq s,r$.
  Then $$\rho(G)( x_r-x_s)=-\frac{1}{d_{sr}} ( x_r-x_s),$$
  and thus $x_r=x_s$.
    \end{proof}

 \section{Graphs with given matching number}

Let $G_1\cup \cdots \cup G_k$  be the vertex-disjoint union of the graphs $G_1,\cdots, G_k$  $(k\geq 2)$,  and $G_1\vee G_2$ be the
graph obtained from $G_1 \cup  G_2$ by joining each vertex of $G_1$
 to each vertex of $G_2$.

\begin{lemma}\label{lem3}
Let $G_1= K_s\vee (K_{n_1} \cup K_{n_2} \cup \cdots \cup K_{n_k})$ and $G_2= K_s\vee (K_{n_1-1} \cup K_{n_2+1} \cup \cdots \cup K_{n_k})$.
 If  $n_2 \geq n_1\geq 2$,  then $\rho(G_1)<\rho(G_2)$.
\end{lemma}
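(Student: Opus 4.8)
The plan is to use the symmetry of $G_1$ to determine its principal eigenvector explicitly, and then test that eigenvector in the Rayleigh quotient of $RD(G_2)$.

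First, note that $G_1$ is connected (so $s\ge 1$) and hence has diameter $2$: two vertices are at distance $2$ exactly when they lie in two different cliques $K_{n_i},K_{n_j}$, and at distance $1$ otherwise. By Lemma~\ref{lem2}, the principal eigenvector $X$ of $RD(G_1)$ is constant on $V(K_s)$ and on each $V(K_{n_i})$; write $x_v=a>0$ for $v\in V(K_s)$ and $x_v=b_i>0$ for $v\in V(K_{n_i})$. Evaluating \eqref{1} at a vertex of $K_{n_i}$ and setting $\rho=\rho(G_1)$ yields, for $1\le i\le k$,
$$\rho\,b_i=sa+(n_i-1)b_i+\tfrac12\sum_{l\neq i}n_lb_l .$$

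Next, I would compare the $b_i$. Subtracting the $j$-th of the above equations from the $i$-th, the terms $sa$ and $\tfrac12\sum_l n_l b_l$ cancel, giving $\big(\rho-(\tfrac{n_i}{2}-1)\big)b_i=\big(\rho-(\tfrac{n_j}{2}-1)\big)b_j$; thus $\big(\rho-(\tfrac{n_i}{2}-1)\big)b_i$ is independent of $i$. Since $K_{n_i}$ is a proper subgraph of $G_1$, Corollary~\ref{co1} gives $\rho>\rho(K_{n_i})=n_i-1\ge \tfrac{n_i}{2}-1$, so each factor $\rho-(\tfrac{n_i}{2}-1)$ is positive. Because $n_2\ge n_1$ makes $\rho-(\tfrac{n_2}{2}-1)\le\rho-(\tfrac{n_1}{2}-1)$, it follows that $b_2\ge b_1>0$.

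Finally, realize $G_2$ as the graph obtained from $G_1$ by moving a single vertex $w\in V(K_{n_1})$ into $K_{n_2}$, so $G_1$ and $G_2$ share the same vertex set. Then $RD(G_2)-RD(G_1)$ is nonzero only on the pairs $\{w,u\}$ with $u\in V(K_{n_1})\setminus\{w\}$ (where the entry drops from $1$ to $\tfrac12$) and the pairs $\{w,u\}$ with $u$ in the old $K_{n_2}$ (where the entry rises from $\tfrac12$ to $1$). Plugging in the values from the first step, the resulting quadratic form is
$$X^{T}\big(RD(G_2)-RD(G_1)\big)X=b_1\big(n_2b_2-(n_1-1)b_1\big),$$
and since $b_2\ge b_1>0$ and $n_2\ge n_1\ge 2$ we get $n_2b_2\ge n_1b_1>(n_1-1)b_1$, so this is strictly positive. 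As $X$ is a unit vector, $\rho(G_2)\ge X^{T}RD(G_2)X=\rho(G_1)+X^{T}\big(RD(G_2)-RD(G_1)\big)X>\rho(G_1)$, as desired. The main obstacle is the second step — controlling the eigenvector entries on cliques of different orders; the key point is the exact identity that $\big(\rho-(\tfrac{n_i}{2}-1)\big)b_i$ does not depend on $i$, which with the bound $\rho>n_i-1$ fixes the sign and yields $b_2\ge b_1$. The remaining pieces (the equitable structure from Lemma~\ref{lem2}, the bookkeeping of changed $RD$-entries, and the Rayleigh step) are routine, and strictness needs no separate equality analysis since $n_1\ge 2$ already forces $n_2b_2-(n_1-1)b_1\ge b_1>0$.
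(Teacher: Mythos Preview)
Your argument is correct and follows essentially the same route as the paper: use Lemma~\ref{lem2} to make the Perron vector constant on each block, subtract the eigenvalue equations to obtain $\big(\rho+1-\tfrac{n_i}{2}\big)b_i=\text{const}$ and hence $b_2\ge b_1$, and then evaluate $X^T\big(RD(G_2)-RD(G_1)\big)X=b_1\big(n_2b_2-(n_1-1)b_1\big)>0$. The only cosmetic difference is that the paper bounds $\rho$ via the larger subgraph $K_{s+n_2}$ rather than $K_{n_i}$, but your weaker bound $\rho>n_i-1$ already suffices to make the factors $\rho+1-\tfrac{n_i}{2}$ positive.
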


\begin{proof}

Let $\rho(G_1)$ be the Harary spectral radius of $G_1$ and $X$ the  corresponding  principal eigenvector.
 By Lemma \ref{lem2}, $X$ can be written  as
  $$X=(\underbrace{y_1,\cdots y_1}_{n_1}, \underbrace{y_2,\cdots y_2}_{n_2},\cdots, \underbrace{y_k,\cdots y_k}_{n_k},\underbrace{y_0,\cdots y_0}_{s}).$$

From Eq. \eqref{1},  we have
\begin{eqnarray*}
\rho(G_1)y_1&=&(n_1-1)y_1+\frac{1}{2}n_2y_2+\sum _{i=3}^k\frac{1}{2}n_iy_i+sy_0,\\
\rho(G_1)y_2 &=&\frac{1}{2}n_1y_1+(n_2-1)y_2+\sum _{i=3}^k\frac{1}{2}n_iy_i+sy_0.
\end{eqnarray*}
It implies that $$\rho(G_1)(y_1-y_2)=\frac{1}{2}n_1y_1-y_1-\frac{1}{2}n_2y_2+y_2,$$
that is, $$(\rho(G_1)+1-\frac{1}{2}n_1)y_1=(\rho(G_1)+1-\frac{1}{2}n_2)y_2.$$
Note that $K_{s+n_2}$ is a subgraph of $G_1$ and $n_2\geq n_1$, we have that $$\rho(G_1)>\rho(K_{s+n_2})=s+n_2-1\geq n_2.$$
Then we have that $$y_1\leq y_2.$$

 From the definition of Harary matrix, we know that
 \begin{equation*}
 RD(G_1)=\left(
           \begin{array}{ccccc}
             (J-I)_{n_1\times n_1} & \frac{1}{2}J_{n_1\times n_2} & \cdots & \frac{1}{2}J_{n_1\times n_k} & J_{n_1\times s} \\
             \frac{1}{2}J_{n_2\times n_1} &(J-I)_{n_2\times n_2} & \cdots & \frac{1}{2}J_{n_2\times n_k} & J_{n_2\times s} \\
             \vdots & \vdots & \ddots & \vdots & \vdots \\
            \frac{1}{2}J_{n_k\times n_1} &J_{n_k\times n_2} & \cdots & (J-I)_{n_k\times n_k} & J_{n_k\times s} \\
             J_{s\times n_1} & J_{s\times n_2} & \cdots & J_{s\times n_k} & (J-I)_{s\times s}\\
           \end{array}
         \right),
\end{equation*}
and, 
\begin{equation*}
RD(G_2)=\left(
           \begin{array}{ccccc}
             (J-I)_{(n_1-1)\times (n_1-1)} & \frac{1}{2}J_{(n_1-1)\times (n_2+1)} & \cdots & \frac{1}{2}J_{(n_1-1)\times n_k} & J_{(n_1-1)\times s} \\
             \frac{1}{2}J_{(n_2+1)\times (n_1-1)} &(J-I)_{(n_2+1)\times (n_2+1)} & \cdots & \frac{1}{2}J_{(n_2+1)\times n_k} & J_{n_2\times s} \\
             \vdots & \vdots & \ddots & \vdots & \vdots \\
            \frac{1}{2}J_{n_k\times(n_1-1)} &J_{n_k\times (n_2+1)} & \cdots & (J-I)_{n_k\times n_k} & J_{n_k\times s} \\
             J_{s\times (n_1-1)} & J_{s\times (n_2+1)} & \cdots & J_{s\times n_k} & (J-I)_{s\times s}\\
           \end{array}
         \right).
\end{equation*}
Thus
\begin{equation*}
RD(G_2)- RD(G_1)=\left(
           \begin{array}{cccc}
             0_{(n_1-1)\times (n_1-1)} & -\frac{1}{2}J_{(n_1-1)\times 1} & 0_{(n_1-1)\times n_2} & 0 \\
             -\frac{1}{2}J_{1\times (n_1-1)} &0_{1\times 1} & \frac{1}{2}J_{1\times n_2}  & 0\\
            0_{n_2\times (n_1-1)} &\frac{1}{2}J_{n_2\times 1} & 0_{n_2\times n_2} & 0 \\
             0 & 0 & 0 & 0\\
           \end{array}
         \right).
  \end{equation*}
Hence
\begin{eqnarray*}
  \rho(G_2)-\rho(G_1) &\geq & X^T RD(G_2)X- X^TRD(G_1))X \\
  &=& X^T (RD(G_2)- RD(G_1))X=n_2y_1y_2-(n_1-1)y_1^2\\
  &>& 0.
\end{eqnarray*}
We complete the proof.
\end{proof}

\begin{lemma}\label{lem4}
Let $G= K_s\vee(\overline{K_{k-1}}\cup K_{2t+1})$ with $t\geq 1, k\geq 3$, and $G'= K_{s+t}\vee \overline{K_{k+t}}$. One has that $\rho(G)<\rho(G')$.
\end{lemma}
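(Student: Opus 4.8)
The plan is to imitate the proof of Lemma~\ref{lem3}: identify $V(G')$ with $V(G)$ and feed the principal eigenvector of $G$ into the Rayleigh quotient of $RD(G')$. Write $\rho=\rho(G)$, let $X$ be the principal eigenvector of $RD(G)$, and let $D,I,C$ be the vertex classes inducing $K_s,\overline{K_{k-1}},K_{2t+1}$. Vertices in the same class satisfy $N_G(u)\setminus\{v\}=N_G(v)\setminus\{u\}$, so by Lemma~\ref{lem2} the vector $X$ is constant on each class, say equal to $a,b,c$ on $D,I,C$. Subtracting Eq.~\eqref{1} for a vertex of $I$ from Eq.~\eqref{1} for a vertex of $C$ (as in Lemma~\ref{lem3}, since all other distances coincide) gives $(\rho+\tfrac12)b=(\rho+\tfrac12-t)c$. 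Since $K_{s+2t+1}=K_s\vee K_{2t+1}$ is a subgraph of $G$, Corollary~\ref{co1} gives $\rho>s+2t>t$, so $0<b<c$ and, more precisely, $b=\bigl(1-\tfrac{2t}{2\rho+1}\bigr)c$.

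Next I would set up the difference matrix. Identify $V(G')$ with $D\cup I\cup C$ so that the clique $K_{s+t}$ of $G'$ consists of $D$ together with $t$ prescribed vertices of $C$, and $\overline{K_{k+t}}$ consists of $I$ together with the remaining $t+1$ vertices of $C$. Comparing the two graphs pair by pair, the reciprocal distance changes only for the $(k-1)t$ pairs joining $I$ to the relocated $t$ vertices ($\tfrac12\to 1$) and for the $\binom{t+1}{2}$ pairs inside the block of $t+1$ non-relocated vertices of $C$ ($1\to\tfrac12$). Hence
\[
X^{T}\bigl(RD(G')-RD(G)\bigr)X=(k-1)t\,bc-\tfrac{t(t+1)}{2}c^{2}=tc\Bigl[(k-1)b-\tfrac{t+1}{2}c\Bigr].
\]
Because $\|X\|=1$, the Rayleigh principle gives $\rho(G')\ge X^{T}RD(G')X=\rho+X^{T}\bigl(RD(G')-RD(G)\bigr)X$, so the lemma reduces to proving $(k-1)b>\tfrac{t+1}{2}c$, i.e.
\[
1-\frac{2t}{2\rho+1}>\frac{t+1}{2(k-1)} .
\]

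This last inequality is the crux and the step I expect to be the real obstacle. The bound $\rho>s+2t$ obtained above from a clique subgraph is in general too weak to establish it, so one must extract a genuinely sharper lower bound on $\rho=\rho(G)$: retain all three scalar equations of Eq.~\eqref{1}, i.e.\ estimate from below the Perron root of the equitable quotient matrix of $RD(G)$ on $\{D,I,C\}$, namely $\begin{pmatrix} s-1 & k-1 & 2t+1\\ s & (k-2)/2 & (2t+1)/2\\ s & (k-1)/2 & 2t\end{pmatrix}$, and combine this with $b=\bigl(1-\tfrac{2t}{2\rho+1}\bigr)c$ to force $(k-1)b-\tfrac{t+1}{2}c>0$ (equivalently $\tfrac{2t}{2\rho+1}<\tfrac{2k-t-3}{2(k-1)}$). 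Once that is in hand we get $X^{T}(RD(G')-RD(G))X>0$, and therefore $\rho(G)<\rho(G')$, as claimed.
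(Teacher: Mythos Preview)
Your plan is exactly the paper's: feed the Perron vector $X$ of $G$ into $RD(G')$ and show $X^{T}(RD(G')-RD(G))X>0$. Your identification of $V(G')$ with $V(G)$ and your block computation of the difference are correct; the change is $+\tfrac12$ on the $(k-1)t$ pairs $I\times C_{1}$ and $-\tfrac12$ on the $\binom{t+1}{2}$ pairs inside $C_{2}$, giving
\[
X^{T}\bigl(RD(G')-RD(G)\bigr)X=(k-1)t\,bc-\tfrac{t(t+1)}{2}c^{2}.
\]
(The paper records the negative term as $-tz^{2}$ instead of $-\tfrac{t(t+1)}{2}z^{2}$; with that slip it only needs $(k-1)y>z$, which does follow from $\rho>2t$.)

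The genuine gap is the step you yourself singled out. No lower bound on $\rho$, however sharp, can force $(k-1)b>\tfrac{t+1}{2}c$ in general. From $(\rho+\tfrac12)b=(\rho+\tfrac12-t)c$ and $\rho>t$ you already have $0<b<c$. For $k=3$ the needed inequality reads $b>\tfrac{t+1}{4}c$, which for $t\ge 3$ demands $b\ge c$, contradicting $b<c$. So the Rayleigh argument with this identification \emph{must} fail for $k=3$, $t\ge 3$, and the quotient--matrix estimate you propose cannot rescue it. In fact the lemma as stated is false in that range: for $s=1$, $k=3$, $t=3$ one has $G=K_{1}\vee(\overline{K_{2}}\cup K_{7})$ and $G'=K_{4}\vee\overline{K_{6}}$; the equitable quotient of $RD(G')$ is $\begin{pmatrix}3&6\\4&5/2\end{pmatrix}$ with Perron root $\rho(G')=\tfrac{1}{4}(11+\sqrt{385})\approx 7.655$, while the quotient of $RD(G)$ is $\begin{pmatrix}0&2&7\\1&1/2&7/2\\1&1&6\end{pmatrix}$ with characteristic polynomial $\lambda^{3}-\tfrac{13}{2}\lambda^{2}-\tfrac{19}{2}\lambda+\tfrac{3}{2}$ and Perron root $\rho(G)\approx 7.71>\rho(G')$. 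Thus the ``crux'' is not a technical estimate to be tightened; the inequality you are trying to prove is false, and so is the statement itself for these parameters.
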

\begin{proof}
Let $\rho=\rho(G)$ be the Harary spectral radius of $G$ and $X$ be the principal eigenvector. By Lemma 2.3, $X$ is positive and can be written as
$$
X=(\underbrace{x,\cdots,x}_{s},\underbrace{y,\cdots,y}_{k-1},\underbrace{z,\cdots,z}_{2t+1})^T.
$$
From the definition of Harary matrix, we know that
\begin{equation*}
 RD(G)=\left(
           \begin{array}{cccc}
             (J-I)_{s\times s} & J_{s\times (k-1)} & J_{s\times (2t+1)} \\
             J_{(k-1)\times s} & \frac{1}{2}(J-I)_{(k-1)\times (k-1)}& \frac{1}{2}J_{(k-1)\times (2t+1)} \\
            J_{(2t+1)\times s} & \frac{1}{2}J_{(2t+1)\times (k-1)} & (J-I)_{(2t+1)\times (2t+1)} \\
           \end{array}
         \right)
\end{equation*}
and
\begin{equation*}
 RD(G')=\left(
          \begin{array}{ccccc}
            (J-I)_{s\times s} & J_{s\times (k-1)} & J_{s\times t} & J_{s\times t} & J_{s\times 1} \\
            J_{(k-1)\times s} & \frac{1}{2}(J-I)_{(k-1)\times (k-1)}& J_{(k-1)\times t} & \frac{1}{2}J_{(k-1)\times t} & \frac{1}{2}J_{(k-1)\times 1} \\
            J_{t\times s} &  J_{t\times (k-1)} &  (J-I)_{t\times t} &  J_{t\times t} &  J_{t\times 1} \\
            J_{t\times s} &  \frac{1}{2}J_{t\times (k-1)} &  J_{t\times t} &  \frac{1}{2}(J-I)_{t\times t} &  \frac{1}{2}J_{t\times 1} \\
            J_{1\times s} &  \frac{1}{2}J_{1\times (k-1)} &  J_{1\times t} &  \frac{1}{2}J_{1\times t} &  0_{1\times 1} \\
          \end{array}
        \right),
\end{equation*}
 thus
\begin{equation}\label{6}
  \begin{split}
  \rho(G')-\rho &\geq X^T(RD(G')-RD(G))X \\
                    &=t(k-1)yz-tz^2 \\
        &=tz((k-1)y-z).
  \end{split}
\end{equation}
As $X$ is the principal eigenvector corresponding to $\rho=\rho(G)$, from Eq. \eqref{1}, we have
\begin{eqnarray*}
  \rho y &=& sx+ \frac{k-2}{2}y+ \frac{1}{2}(2t+1)z,\\
  \rho z &=& sx+ \frac{k-1}{2}y+ 2tz.
\end{eqnarray*}
Then
\begin{equation}\label{7}
  \frac{y}{z}=\frac{2\rho-2t+1}{2\rho+1}.
\end{equation}
Hence
\begin{eqnarray*}
  (k-1)y-z &=& (k-1)\frac{2\rho-2t+1}{2\rho+1}z- z\\
   &=& \frac{z}{2\rho+1}(2(k-2)\rho-(k-1)(2t-1)-1 \\
   &=& \frac{2(k-2)z}{2\rho+1}(\rho-\frac{(k-1)(2t-1)+1}{2(k-2)})\\
   &>& \frac{2(k-2)z}{2\rho+1}(\rho-2t).
\end{eqnarray*}
Note that $K_{s+2t+1}$ is a subgraph of $G$, by Corollary \ref{co1}, we have that
$$\rho>\rho(K_{s+2t+1})=s+2t> 2t.$$
Hence we have that
\[
 \rho(G')-\rho > tz\frac{2(k-2)z}{2\rho+1}(\rho-2t)>0.
\]
We complete the proof.
\end{proof}

 A component of a graph $G$ is said to be even (odd) if it has an even (odd)
number of vertices. We use  $o(G)$ to denote  the number of odd components of $G$. Let $G$  be a graph  on $n$ vertices with $\alpha'(G)=p$. By the Tutte-Berge formula,
$$n - 2p = \max\{o(G -X) -|X| : X \subset V (G)\}.$$

\begin{theorem}
Let $G$  be a graph  on $n$ vertices with $\alpha'(G)=p$ which has the maximum Harary spectral radius. Then we have that

\begin{enumerate}
  \item if $p=\lfloor \frac{n}{2}\rfloor$, then $G= K_n$;
  \item if $1 \leq p<\lfloor \frac{n}{2}\rfloor$, then  $G= K_p\vee \overline{K_{n-p}} $.
\end{enumerate}
\end{theorem}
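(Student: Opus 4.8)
The plan is to dispose of the two cases in turn. For part~1, $K_n$ has matching number $\lfloor n/2\rfloor=p$ and every $n$-vertex graph is a subgraph of $K_n$, so by Corollary~\ref{co1} every connected $G\ne K_n$ on $n$ vertices satisfies $\rho(G)<\rho(K_n)$; hence $K_n$ is the unique maximizer. From now on assume $1\le p<\lfloor n/2\rfloor$, and let $G$ be a connected $n$-vertex graph with $\alpha'(G)=p$ of maximum Harary spectral radius. By the Tutte--Berge formula fix $X\subseteq V(G)$ with $o(G-X)-|X|=n-2p$ and set $s:=|X|$. Since $p<\lfloor n/2\rfloor$ we have $n-2p\ge 2$, and $s\ge 1$, for $s=0$ would give $o(G)\ge 2$, contradicting connectedness. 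Let $n_1,\dots,n_\ell$ be the orders of the components of $G-X$; exactly $q:=o(G-X)=n-2p+s$ of them are odd, and $q>s$.

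\emph{Saturation.} Put $G^\ast:=K_s\vee(K_{n_1}\cup\cdots\cup K_{n_\ell})$. Since $G^\ast\supseteq G$ we have $\alpha'(G^\ast)\ge\alpha'(G)=p$ and $\rho(G^\ast)\ge\rho(G)$ by Lemma~\ref{lem1}; on the other hand $G^\ast-X$ has the same $q$ odd components as $G-X$, so the Tutte--Berge formula gives $n-2\alpha'(G^\ast)\ge q-s=n-2p$, i.e. $\alpha'(G^\ast)\le p$. Hence $\alpha'(G^\ast)=p$, and maximality forces $G=G^\ast$. Thus $G=K_s\vee(K_{n_1}\cup\cdots\cup K_{n_\ell})$ with $q=n-2p+s$ of the $n_i$ odd; it is connected because $s\ge 1$, and $n-s=\sum_i n_i\ge\ell\ge q$ forces $s\le p$. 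In what follows I use the elementary fact that any graph $K_s\vee(\text{disjoint union of }q\text{ odd cliques})$ on $n$ vertices with $q>s$ has matching number $(n-q+s)/2$: match inside each clique and then saturate the $s$ vertices of $K_s$.

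Now I remove the remaining freedom by two moves, each strictly increasing $\rho$ while keeping $\alpha'=p$. \emph{No $n_i$ is even.} If some component is even, merge it with an odd one (one exists since $q\ge 3$): the result is again $K_s\vee(\text{odd cliques})$ with the same $q$, so its matching number is $(n-q+s)/2=p$, while its Harary spectral radius exceeds $\rho(G)$ by Lemma~\ref{lem1} --- contradiction. Hence all $n_i$ are odd and $\ell=q$. \emph{At most one $n_i>1$.} If two components $K_{n_i},K_{n_j}$ satisfy $n_i\ge n_j\ge 3$, replace them by $K_{n_i+2}\cup K_{n_j-2}$; this is two successive applications of the shift in Lemma~\ref{lem3} (first $(n_j,n_i)\mapsto(n_j-1,n_i+1)$, then $(n_j-1,n_i+1)\mapsto(n_j-2,n_i+2)$), so $\rho$ strictly increases, while all orders remain odd, so $q$ is unchanged and the matching number stays $(n-q+s)/2=p$ --- contradiction. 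So at most one $n_i\ge 3$. Finally, if $s<p$, then $\sum_i(n_i-1)=2(p-s)>0$ with all $n_i$ odd, so exactly one component has order $2(p-s)+1$ and the other $k-1$ are single vertices, where $k:=n-2p+s\ge 3$; that is, $G=K_s\vee(\overline{K_{k-1}}\cup K_{2(p-s)+1})$. By Lemma~\ref{lem4} with $t:=p-s\ge 1$ we get $\rho(G)<\rho(K_{s+t}\vee\overline{K_{k+t}})=\rho(K_p\vee\overline{K_{n-p}})$, and $K_p\vee\overline{K_{n-p}}$ is a connected $n$-vertex graph of matching number $p$, contradicting maximality. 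Therefore $s=p$, whence $\ell=n-p=\sum_i n_i$ with all $n_i$ odd forces each $n_i=1$, i.e. $G=K_p\vee\overline{K_{n-p}}$.

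I expect the main obstacle to be the bookkeeping that each operation preserves the matching number \emph{exactly} at $p$: this needs the two-sided bound --- ``supergraph'' (or the explicit matching) for $\alpha'\ge p$, and ``evaluate the Tutte--Berge formula at $X$'' for $\alpha'\le p$. The only other subtle point is that in the second move vertices must be moved two at a time, because a single shift as in Lemma~\ref{lem3} flips the parity of both affected components and would destroy the count of odd components; everything else is routine together with direct appeals to Lemmas~\ref{lem1}, \ref{lem3}, \ref{lem4} and Corollary~\ref{co1}.
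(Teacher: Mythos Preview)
Your proof is correct and follows essentially the same route as the paper's: Tutte--Berge to locate a deficiency set $X$, saturate to $K_s\vee(\bigcup K_{n_i})$ via Lemma~\ref{lem1}, eliminate even components, use Lemma~\ref{lem3} to reduce to at most one nontrivial odd clique, and finish with Lemma~\ref{lem4}. You are in fact more careful than the paper on the points you flag---justifying $s\ge 1$ via connectedness, checking that each move preserves $\alpha'=p$ exactly, and noting that Lemma~\ref{lem3} must be applied twice (moving two vertices) so that the parity of components, hence the odd-component count $q$, is preserved.
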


\begin{proof}

The  first assertion is trivial, and so we only  need to prove the second assertion.
Let $X_0$ be a  vertex subset such that $n - 2p = o(G -X_0) -|X_0|$. For convenience, let $|X_0| = s$ and $o(G -X_0) = k$.
Then $n -2p = k - s$. Since $1 \leq p<\lfloor \frac{n}{2}\rfloor$, we know that $k - s\geq 2$. Hence $k\geq 3$.

  If $G - X_0$ has an even component, then by adding an
edge to $G$  between a vertex of an even component and a vertex of an odd component of $G-X_0$, we obtain a graph $G'$ with matching number $p$. From Lemma \ref{lem1}, we know that $\rho(G')>\rho(G)$, a contradiction to the assumption that $G$ has the maximum Harary spectral radius. So we know that all the components of $G-X_0$  are odd.  Let $G_1, G_2,\cdots, G_k$ be the  odd components of $G-X_0$. Similarly, $G_1,G_2,\cdots ,G_k$ and
the subgraph induced by $X_0$ are all complete, and every vertex of $G_i (i = 1,\cdots, k)$ is adjacent to every vertex in $X_0$. Thus  $G= K_s\vee (K_{n_1} \cup K_{n_2} \cup \cdots \cup K_{n_k})$, where $n_i=|V(G_i)|$ for $i=1,2,\cdots, t$.

  First, we claim that $G - X_0$ has at most one odd component whose number
of vertex is more than one. Assume without loss of generality that $n_2\geq n_1\geq 3$. Let  $G'= K_s\vee (K_{n_1-2} \cup K_{n_2+2} \cup \cdots \cup K_{n_k})$. We can easily checked that $\alpha(G')=p$.
  From Lemma \ref{lem3}, we know that $\rho(G)<\rho(G')$, a contradiction.
  Then $G= K_s\vee (\overline{K_{k-1}}\cup K_{t})$, where $s+t+k-1=n$. By Lemma \ref{lem4},  we know that $t=1$. The result follows.
\end{proof}

\section{Bipartite graphs with given matching number}
\begin{lemma}[\cite{Cui}]
Let $K_{n_1,n_2}$ be a completed bipartite graph with $n=n_1+n_2$ vertices. One has that
\[
\rho(K_{n_1,n_2})=\frac{1}{4}(n-2+\sqrt{n^2+12n_1n_2}).
\]
\end{lemma}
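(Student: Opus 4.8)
The plan is to use the fact that a complete bipartite graph has an extremely simple metric. Write $V(K_{n_1,n_2}) = A \cup B$ with $|A| = n_1$ and $|B| = n_2$: two vertices in different parts are adjacent (distance $1$), while two distinct vertices in the same part are at distance $2$, being joined by a path through any vertex of the opposite part. Hence the Harary matrix has the block form
\[
RD(K_{n_1,n_2}) = \begin{pmatrix} \tfrac12(J-I)_{n_1\times n_1} & J_{n_1\times n_2} \\ J_{n_2\times n_1} & \tfrac12(J-I)_{n_2\times n_2} \end{pmatrix}.
\]

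First I would determine the shape of the principal eigenvector. Any two vertices in the same part have the same neighbourhood (the whole opposite part), so Lemma \ref{lem2} forces the principal eigenvector $X$ to be constant on $A$ and constant on $B$; write $X = (a,\dots,a,b,\dots,b)^T$ with $a$ repeated $n_1$ times, $b$ repeated $n_2$ times, and $a,b>0$ by the Perron--Frobenius theorem. Substituting into \eqref{1} collapses the eigenvalue equation to the $2\times 2$ system
\[
\rho(K_{n_1,n_2})\,a = \tfrac12(n_1-1)a + n_2 b, \qquad \rho(K_{n_1,n_2})\,b = n_1 a + \tfrac12(n_2-1)b,
\]
so that, writing $\rho = \rho(K_{n_1,n_2})$, the value $\rho$ is an eigenvalue of the nonnegative irreducible matrix $M = \begin{pmatrix} \tfrac{n_1-1}{2} & n_2 \\ n_1 & \tfrac{n_2-1}{2}\end{pmatrix}$ with the positive eigenvector $(a,b)^T$; hence $\rho$ is the Perron (largest) eigenvalue of $M$.

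It then remains to solve the characteristic equation of $M$. Eliminating $a$ and $b$ gives $\bigl(\rho - \tfrac{n_1-1}{2}\bigr)\bigl(\rho - \tfrac{n_2-1}{2}\bigr) = n_1 n_2$, which after clearing denominators and using $n = n_1 + n_2$ becomes $4\rho^2 - 2(n-2)\rho - (3n_1 n_2 + n - 1) = 0$. A direct computation shows the discriminant of this quadratic equals $4n^2 + 48 n_1 n_2$, so its roots are $\tfrac14\bigl(n - 2 \pm \sqrt{n^2 + 12 n_1 n_2}\bigr)$; since $\sqrt{n^2 + 12 n_1 n_2} > n > n - 2$, only the root with the plus sign is positive, and it is the larger one, hence equals $\rho$. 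This is exactly the claimed formula. The calculation is entirely routine; the only point that needs care is the reduction step — justifying via Lemma \ref{lem2} that the $n\times n$ eigenproblem genuinely reduces to the $2\times 2$ one, and noting that since the reduced Perron vector $(a,b)^T$ is positive, the Harary spectral radius is the \emph{larger} eigenvalue of $M$ rather than the smaller.
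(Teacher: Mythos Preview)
Your argument is correct: the block description of $RD(K_{n_1,n_2})$, the reduction via Lemma~\ref{lem2} to the $2\times 2$ quotient matrix $M$, and the quadratic computation all check out, and your remark that the positive Perron vector of $M$ forces $\rho$ to be the larger root is the right way to pin down the sign.

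There is nothing to compare against here: the paper does not prove this lemma but simply imports it from \cite{Cui}. Your write-up therefore supplies a self-contained verification that the paper omits, using only tools (Lemma~\ref{lem2} and equation~\eqref{1}) already available in the text.
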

\begin{coro} \label{coro:2.1} \
\begin{equation}\label{9}
\rho(K_{1,n-1})<\rho(K_{2,n-2})<\ldots< \rho(K_{\lfloor\frac{n}{2}\rfloor,\lceil\frac{n}{2}\rceil}).
\end{equation}
\end{coro}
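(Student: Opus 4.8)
The plan is to use the closed-form expression for $\rho(K_{n_1,n_2})$ from the preceding lemma and show that, with $n = n_1 + n_2$ fixed, the value $\rho(K_{n_1,n_2})$ is strictly increasing in the product $n_1 n_2$. Write $f(t) = \frac{1}{4}\bigl(n - 2 + \sqrt{n^2 + 12t}\bigr)$, so that $\rho(K_{n_1,n_2}) = f(n_1 n_2)$. Since $\sqrt{\cdot}$ is strictly increasing on the nonnegative reals and $n^2 + 12t$ is strictly increasing in $t$, the function $f$ is strictly increasing in $t$. Hence it suffices to compare the products $n_1 n_2$ across the chain.

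The key step is then the elementary observation that for integers $1 \le i < \lfloor n/2 \rfloor$ one has $i(n-i) < (i+1)(n-i-1)$. Indeed, $(i+1)(n-i-1) - i(n-i) = n - 2i - 1 \ge 0$, with strict inequality exactly when $i < (n-1)/2$, i.e. when $i \le \lfloor n/2 \rfloor - 1$ (one should check the two parities of $n$ separately: if $n$ is even the last step $i = \tfrac n2 - 1 \to \tfrac n2$ gives $n - 2i - 1 = 1 > 0$, and if $n$ is odd the last step $i = \tfrac{n-1}{2} - 1 \to \tfrac{n-1}{2} = \lfloor n/2\rfloor$ gives $n - 2i - 1 = 2 > 0$, so strictness holds throughout). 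Therefore
\[
1\cdot(n-1) < 2\cdot(n-2) < \cdots < \left\lfloor\tfrac n2\right\rfloor\left\lceil\tfrac n2\right\rceil,
\]
and applying the strictly increasing function $f$ to this chain of strict inequalities yields \eqref{9}.

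There is essentially no obstacle here: the corollary is an immediate consequence of monotonicity of the square root together with the concavity/unimodality of the product $i(n-i)$. The only point requiring a line of care is confirming that each inequality in the chain is \emph{strict} up to and including the last one, which is handled by the parity check above; no further computation is needed.
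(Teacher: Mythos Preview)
Your proof is correct and is exactly the argument the paper intends: the corollary is stated without proof precisely because it follows at once from the closed form $\rho(K_{n_1,n_2})=\tfrac14\bigl(n-2+\sqrt{n^2+12n_1n_2}\bigr)$ via monotonicity in the product $n_1n_2$, which is what you carry out. The parity check for strictness is a nice touch but not strictly necessary, since $n-2i-1>0$ for all integers $i\le\lfloor n/2\rfloor-1$ follows directly from $2i\le n-2<n-1$.
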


 A covering of a graph $G$ is a vertex subset
$K\subseteq V(G)$ such that each edge of $G$ has at least one end in
the set $K$. The number of vertices in a minimum covering of a graph
$G$ is called the covering number of $G$ and denoted by $\beta(G)$.

\begin{lemma}\label{lem3.3}   \emph{(The K\"{o}nig-Egerv\'{a}ry Theorem, \cite{Eger,K}).}
In any bipartite graph, the number of edges in a maximum matching is equal to
the number of vertices in a minimum covering.
\end{lemma}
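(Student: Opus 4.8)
The plan is to prove separately the two inequalities $\alpha'(G)\le\beta(G)$ and $\beta(G)\le\alpha'(G)$ for a bipartite graph $G$ with parts $A$ and $B$. The first is immediate and holds for arbitrary graphs: if $M$ is a maximum matching and $K$ a minimum cover, then $K$ contains at least one end of each of the $|M|$ pairwise disjoint edges of $M$, so $|K|\ge|M|$. All of the content lies in the reverse inequality, for which I would fix a maximum matching $M$ and construct from it an explicit vertex cover of size $|M|$; it is here that bipartiteness enters.

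For that construction, let $U\subseteq A$ be the set of $M$-unsaturated vertices of $A$, and let $Z$ be the set of all vertices reachable from some vertex of $U$ by an $M$-alternating path, i.e.\ a path whose edges alternate between $E(G)\setminus M$ and $M$, starting with a non-matching edge (so the $A$-vertices of such a path lie in $U$ or are reached along matching edges, and the $B$-vertices are reached along non-matching edges). Put
\[
 K=(A\setminus Z)\cup(B\cap Z).
\]
I would then prove: (i) $K$ is a vertex cover of $G$, and (ii) $|K|\le|M|$. Granting these, $\beta(G)\le|K|\le|M|=\alpha'(G)$, which together with the first inequality gives the asserted equality (and incidentally $|K|=|M|$).

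Step (i) is a short case analysis: an uncovered edge would have to join $A\cap Z$ to $B\setminus Z$; tracing the alternating path to its $A$-endpoint $a$ — either $a\in U$, so the edge itself is a length-one alternating path, or $a$ was reached along a matching edge and, the new edge not being in $M$, it extends that path — forces the $B$-endpoint into $Z$, a contradiction. Step (ii) is where the care is needed, and I expect it to be the main obstacle: one must check that every vertex of $B\cap Z$ is $M$-saturated (otherwise the alternating path reaching it would be an $M$-augmenting path, contradicting maximality of $M$), that every vertex of $A\setminus Z$ is $M$-saturated (because $U\subseteq Z$), and, crucially, that no edge of $M$ joins $A\setminus Z$ to $B\cap Z$ — for otherwise following that matching edge from its $B\cap Z$ end would drag its $A$-end into $Z$. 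Together these show that the $M$-edges meeting $K$ are distinct, exactly one per vertex of $K$, whence $|K|\le|M|$.

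Finally, I would remark that an alternative and perhaps cleaner route derives the statement from the max-flow min-cut theorem: orient every edge from $A$ to $B$, add a source $s$ sending a unit-capacity arc to each vertex of $A$ and a sink $t$ receiving a unit-capacity arc from each vertex of $B$, and give all original arcs capacity $+\infty$; then integral maximum $s$-$t$ flows correspond to matchings of $G$ and minimum cuts to vertex covers of $G$. I would nonetheless present the alternating-path argument above as the primary proof, since it is self-contained and avoids developing network-flow machinery.
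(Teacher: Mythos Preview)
Your proof is correct and is precisely the standard alternating-path argument for the K\"onig--Egerv\'ary theorem; both steps (i) and (ii) go through as you outline, and the remark on the max-flow min-cut derivation is also accurate.

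Note, however, that the paper does not supply its own proof of this lemma at all: it is simply quoted as a classical result with citations to Egerv\'ary and K\"onig, and then invoked in the subsequent discussion to identify $\beta(G)$ with $\alpha'(G)$ for bipartite $G$. So there is no proof in the paper to compare against; what you have written is a self-contained justification that the paper omits.
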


Let $G=G[X,Y]\not= K_{p,n-p}$ be a bipartite graph such that $\alpha'(G)=p$. From Lemma \ref{lem3.3}, we know that $\beta(G)=p$. Let
$S$ be  a minimum covering of $G$ and  $X_1=S \cap X\not=\emptyset$, $Y_1=S \cap
Y\not=\emptyset$. Set $X_2=X\setminus X_1$, $Y_2=Y\setminus
Y_1$. We have that $E(X_2,Y_2)=\emptyset$ since $S$ is a covering of
$G$.

Let $G^*[X,Y]$ be a bipartite graph with the same vertex set as $G$
such that $E(G^*)=\{xy:x\in X_1, y\in Y\}\cup \{xy: x\in X_2, y\in
Y_1\}$. Obviously, $G$ is a subgraph of $G^*$. From Lemma \ref{lem1}, we
know that

\begin{equation}\label{3.1}
\rho(G)\leq \rho(G^*),
\end{equation}
with equality holds if and only if $G= G^*$.

Let $$G'=G^*-\{uv: u\in X_2, v\in Y_1\}+\{uw: u\in X_2, w\in X_1\}, $$ and
$$G''=G^*-\{uv: u\in X_1, v\in Y_2\}+\{uw: u\in Y_2, w\in Y_1\}. $$
Then we have the following conclusion:
\begin{figure}[h,t,b,p]
\begin{center}
\scalebox{1}[1]{\includegraphics{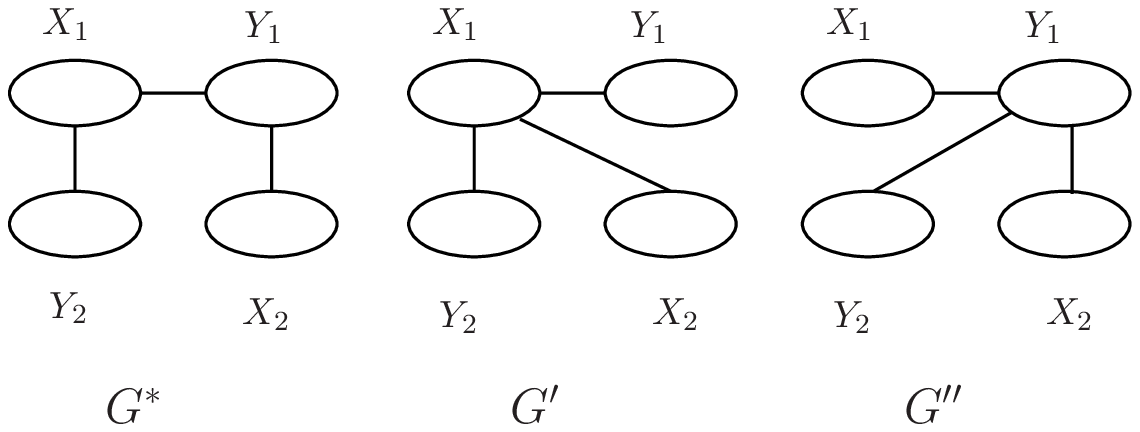}}\\[12pt]
Figure~1. $G^*, G'$ and $G''$
\end{center}
\end{figure}

\begin{lemma}\label{lem:3.2}
Let $G^*, G'$ and $G''$ be the graph defined above (see Figure 1) with $X_2\not=\emptyset$ and $Y_2\not=\emptyset$.
Then one has
\begin{equation}\label{1.2}
\rho(G^*)< \rho(G'), \ \ \textit{or } \ \  \rho(G^*)< \rho(G'').
\end{equation}
\end{lemma}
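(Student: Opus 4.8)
The plan is to use the Rayleigh quotient. Let $\rho=\rho(G^*)$ and let $X=(x_v)_{v\in V}$ be the principal eigenvector of $RD(G^*)$. By Lemma~\ref{lem2}, vertices in the same part of the natural partition $X_1,X_2,Y_1,Y_2$ receive a common value; write $a,b,c,d$ for the values on $X_1,X_2,Y_1,Y_2$ respectively, and set $|X_i|=p_i$, $|Y_i|=q_i$. The idea is to compute $RD(G')-RD(G^*)$ and $RD(G'')-RD(G^*)$ entrywise, so that by the variational characterization $\rho(G')-\rho\ge X^{T}(RD(G')-RD(G^*))X$ and similarly for $G''$, and then argue that at least one of the two quadratic forms is positive.

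First I would work out the distance structures. In $G^*$ (the "double star of complete bipartite pieces") the vertices of $X_1$ and $Y_1$ are mutually at distance $\le 2$ of everything, $X_2$ is an independent set all of whose members are at distance $2$ from each other and from $Y_2$, and $Y_2$ is similar; so $RD(G^*)$ is a block matrix with explicit $1$'s and $\tfrac12$'s and $(J-I)$-type blocks. Passing to $G'$ one deletes all $X_2$–$Y_1$ edges and adds all $X_2$–$X_1$ edges: this turns each $X_2$ vertex into a pendant-type vertex attached to $X_1$, which changes several reciprocal distances (notably $X_2$–$Y_1$ drops from $1$ to $\tfrac12$, $X_2$–$Y_2$ may change, $X_2$–$X_1$ rises from $\tfrac12$ to $1$, and $X_2$–$X_2$ rises from $\tfrac12$ to $\tfrac12$ or so — these must be tabulated carefully). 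The analogous computation for $G''$ swaps the roles of the two sides. I would then record $X^{T}(RD(G')-RD(G^*))X$ as a quadratic form $f'$ in $a,b,c,d$ (with coefficients built from $p_1,p_2,q_1,q_2$) and likewise $f''$, noting the symmetry $f''=f'$ under the exchange $(a,b,p_1,p_2)\leftrightarrow(c,d,q_1,q_2)$.

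The final step is to show $\max\{f',f''\}>0$. I expect each of $f',f''$ to have the shape "positive cross-terms minus one negative square term," e.g. $f'=p_2\,c\,(\text{linear in }a,c,d)-(\text{something})\,b^{2}$ up to the $\tfrac12$ factors. To close it, I would use Eq.~\eqref{1} for $G^*$ to get relations among $a,b,c,d$ and $\rho$ — in particular expressions for $b$ in terms of $a,c,d$ and for $d$ in terms of $a,b,c$ — together with the lower bound $\rho>\rho(K_{p_1+q_1})=p_1+q_1-1$ coming from Corollary~\ref{co1} (since $G^*$ contains the complete bipartite graph $K_{X_1,Y}$, hence $K_{p_1,q_1+q_2}$, so $\rho$ is at least its Harary spectral radius, which is comfortably large). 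Substituting these eigenvalue identities should let me bound the "bad" square term by the cross-terms and conclude $f'>0$, unless the substitution instead forces $f''>0$; one of the two inequalities must survive because the degenerate case making $f'\le 0$ corresponds, after the side-swap, to the regime where $f''$ is clearly positive.

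The main obstacle will be the bookkeeping in the middle step: getting every block of $RD(G')-RD(G^*)$ and $RD(G'')-RD(G^*)$ exactly right (especially the $X_2$–$Y_2$ and $X_2$–$X_2$, $Y_2$–$Y_2$ entries, whose reciprocal distances can be $1$, $\tfrac12$, or $\tfrac13$ depending on the configuration), and then massaging the two resulting quadratic forms with the eigenvalue equations so that the sign becomes transparent. Once the forms are in hand, the positivity argument — exploiting the $X$–$Y$ symmetry and the crude but sufficient bound $\rho>p_1+q_1-1$ — should be routine. Care is also needed with the edge cases $p_1=1$ or $q_1=1$ (where $X_1$ or $Y_1$ is a single vertex), but these only simplify the count and do not change the conclusion.
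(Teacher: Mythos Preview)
Your overall strategy --- principal eigenvector, Rayleigh quotient, compute $RD(G')-RD(G^*)$ and $RD(G'')-RD(G^*)$ blockwise, then show one of the two quadratic forms is positive --- is exactly the paper's. However, the endgame you anticipate is more complicated than what actually happens. When you do the block computation carefully (in $G^*$ the only distance-$3$ pairs are $X_2$--$Y_2$; passing to $G'$ changes $X_1$--$X_2$ from $\tfrac12$ to $1$, $X_2$--$Y_1$ from $1$ to $\tfrac12$, $X_2$--$Y_2$ from $\tfrac13$ to $\tfrac12$, and nothing else), you get in your notation
\[
f' \;=\; p_2\,b\bigl(p_1 a - q_1 c\bigr) + \tfrac{1}{3}\,p_2 q_2\, b d,
\qquad
f'' \;=\; q_2\,d\bigl(q_1 c - p_1 a\bigr) + \tfrac{1}{3}\,p_2 q_2\, b d.
\]
There are no square terms at all, and no eigenvalue identities or lower bounds on $\rho$ are needed: the common term $\tfrac{1}{3}p_2q_2\,bd$ is strictly positive (since $X_2,Y_2\neq\emptyset$ and the Perron vector is positive), while the remaining summands in $f'$ and $f''$ carry opposite signs in the factor $p_1 a - q_1 c$. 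Hence whichever sign that factor has, one of $f',f''$ is strictly positive, which is precisely how the paper concludes.
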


\begin{proof}
Let $\rho=\rho(G^*)$ be the Harary spectral radius of $G^*$ and $X$ the principal eigenvector. By Lemma 2.3, $X$ is positive and can be written as
$$
X=(\underbrace{x_1,\cdots,x_1}_{a},\underbrace{x_2,\cdots,x_2}_{b},
\underbrace{y_1,\cdots,y_1}_{c},\underbrace{y_2\cdots,y_2}_{d})^T,
$$
where $a=|X_1|, b=|X_2|, c=|Y_1|$ and $d=|Y_2|$.

As
\begin{equation*}
 RD(G^*)=\frac{1}{2}(J-I)+\left(
           \begin{array}{cccc}
             0_{a\times a} &  \frac{1}{2}J_{a\times b} & J_{a\times c} & J_{a\times d} \\
             \frac{1}{2}J_{b\times a} & 0_{b\times b} & J_{b\times c} & \frac{1}{3}J_{b\times d} \\
             J_{c\times a} & J_{c\times b} & 0_{c\times c} & \frac{1}{2}J_{c\times d} \\
             J_{d\times a} & \frac{1}{3}J_{d\times b} & \frac{1}{2}J_{d\times c} & 0_{d\times d} \\
           \end{array}
         \right)
\end{equation*}
and
\begin{equation*}
 RD(G')=\frac{1}{2}(J-I)+\left(
           \begin{array}{cccc}
             0_{a\times a} &  J_{a\times b} & J_{a\times c} & J_{a\times d} \\
             J_{b\times a} & 0_{b\times b} & \frac{1}{2}J_{b\times c} & \frac{1}{2}J_{b\times d} \\
             J_{c\times a} & \frac{1}{2}J_{c\times b} & 0_{c\times c} & \frac{1}{2}J_{c\times d} \\
             J_{d\times a} & \frac{1}{2}J_{d\times b} & \frac{1}{2}J_{d\times c} & 0_{d\times d} \\
           \end{array}
         \right),
\end{equation*}
we have

\begin{equation}\label{4}
  \begin{split}
    &X^T(RD(G')-RD(G^*))X \\
     &= X^T\left(
            \begin{array}{cccc}
             0_{a\times a} &  \frac{1}{2}J_{a\times b} & 0_{a\times c} & 0_{a\times d} \\
             \frac{1}{2}J_{b\times a} & 0_{b\times b} & -\frac{1}{2}J_{b\times c} & \frac{1}{6}J_{b\times d} \\
             0_{c\times a} & -\frac{1}{2}J_{c\times b} & 0_{c\times c} & 0_{c\times d} \\
             0_{d\times a} & \frac{1}{6}J_{d\times b} & 0_{d\times c} & 0_{d\times d} \\
            \end{array}
          \right)
    X \\
    &=abx_1x_2-bcx_2y_1+\frac{1}{3}bdx_2y_2\\
    &=bx_2(ax_1-cy_1)+ \frac{1}{3}bdx_2y_2.
  \end{split}
\end{equation}
Similarly, one has that

$$X^T(RD(G'')-RD(G^*))X =dy_2(cy_1- ax_1)+ \frac{1}{3}bdx_2y_2.$$
  It is easy to see that either $X^T(RD(G')-RD(G^*))X>0$ or $X^T(RD(G'')-RD(G^*))X>0,$ i.e., $\rho(G^*)<\rho(G')$ or $\rho(G^*)<\rho(G'')$.
This completes the proof.
\end{proof}
By (\ref{3.1}) and (\ref{1.2}), together with Corollary \ref{coro:2.1}, it is straightforward to see that
\begin{theorem}
For any bipartite graph $G$ with matching number $p$ and $G\not= K_{p,n-p}$, one has that $\rho(G)<\rho(K_{p,n-p})$.
\end{theorem}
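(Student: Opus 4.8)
The plan is to verify the inequality directly for an arbitrary bipartite $G=G[X,Y]$ with $\alpha'(G)=p$ and $G\neq K_{p,n-p}$, chaining the inequalities that have already been set up. By the K\"{o}nig--Egerv\'{a}ry Theorem (Lemma \ref{lem3.3}) we have $\beta(G)=p$; I would fix a minimum covering $S$ and, exactly as in the paragraph preceding Lemma \ref{lem:3.2}, put $X_1=S\cap X$, $Y_1=S\cap Y$, $X_2=X\setminus X_1$, $Y_2=Y\setminus Y_1$, so that $E(X_2,Y_2)=\emptyset$ and $|X_1|+|Y_1|=|S|=p$. The argument then splits according to whether $S$ meets both colour classes. (We may assume $p\geq1$, the case $p=0$ being vacuous; note also $2p\leq n$, hence $p\leq\lfloor n/2\rfloor$.)

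\emph{Degenerate case: $X_1=\emptyset$ or $Y_1=\emptyset$.} Say $Y_1=\emptyset$, so $S=X_1$ and $|X_1|=p$. Every edge of $G$ is covered by $X_1$, hence each vertex of $X_2$ is isolated and every edge joins $X_1$ to $Y$. Reading off the bipartition $\bigl(X_1,\;Y\cup X_2\bigr)$, whose parts have sizes $p$ and $n-p$, we see that $G$ is a spanning subgraph of a copy of $K_{p,n-p}$, and a \emph{proper} one because $G\neq K_{p,n-p}$. Since $K_{p,n-p}$ is connected, Corollary \ref{co1} gives $\rho(G)<\rho(K_{p,n-p})$. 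The subcase $X_1=\emptyset$ is symmetric, and $X_1=Y_1=\emptyset$ would force $p=0$.

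\emph{Principal case: $X_1\neq\emptyset$ and $Y_1\neq\emptyset$.} The first point is that $X_2$ and $Y_2$ are then automatically nonempty, so that Lemma \ref{lem:3.2} actually applies: a maximum matching saturates $p$ vertices of $X$, so $|X|\geq p$, while $|X_1|=p-|Y_1|\leq p-1$, whence $|X_2|\geq1$; symmetrically $|Y_2|\geq1$. Now form $G^*,G',G''$ as before. By \eqref{3.1}, $\rho(G)\leq\rho(G^*)$, and by Lemma \ref{lem:3.2}, $\rho(G^*)<\rho(G')$ or $\rho(G^*)<\rho(G'')$. The crucial observation is that $G'$ and $G''$ are themselves complete bipartite: in $G'$ one deletes the $X_2$--$Y_1$ edges of $G^*$ (leaving $X_1$ joined to all of $Y$) and adds all $X_1$--$X_2$ edges, so $X_1$ becomes adjacent to every vertex of $X_2\cup Y$ while $X_1$ and $X_2\cup Y$ each remain independent; thus $G'=K_{|X_1|,\,n-|X_1|}$, and the mirror-image argument gives $G''=K_{|Y_1|,\,n-|Y_1|}$. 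Since $X_1,Y_1$ are nonempty with $|X_1|,|Y_1|\leq p-1<p\leq\lfloor n/2\rfloor$, Corollary \ref{coro:2.1} (the chain \eqref{9}) yields $\rho(G')<\rho(K_{p,n-p})$ and $\rho(G'')<\rho(K_{p,n-p})$. Combining, $\rho(G)\leq\rho(G^*)<\max\{\rho(G'),\rho(G'')\}<\rho(K_{p,n-p})$, as desired.

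I expect the main obstacle to be organisational rather than computational: one must (i) dispose of the degenerate configuration in which the minimum cover lies inside a single colour class, where the only thing to notice is that $G$ then embeds as a proper spanning subgraph of $K_{p,n-p}$; and (ii) check that in the genuine case both $X_2$ and $Y_2$ are nonempty, which is precisely the hypothesis of Lemma \ref{lem:3.2} and must be extracted from $\alpha'(G)=p$ together with $|X_1|+|Y_1|=p$. Once those two points are secured, recognising that $G'$ and $G''$ collapse to complete bipartite graphs with part sizes strictly below $p$ reduces the whole statement to the monotonicity recorded in Corollary \ref{coro:2.1}.
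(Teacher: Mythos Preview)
Your proof is correct and follows exactly the route the paper intends: the paper's own argument is the single sentence ``By (\ref{3.1}) and (\ref{1.2}), together with Corollary \ref{coro:2.1}, it is straightforward,'' and you have supplied the missing details---the identification $G'\cong K_{|X_1|,\,n-|X_1|}$, $G''\cong K_{|Y_1|,\,n-|Y_1|}$, the check that $X_2,Y_2\neq\emptyset$, and the appeal to the monotonicity chain \eqref{9}. Your explicit treatment of the degenerate case where the minimum cover sits inside a single colour class is in fact more careful than the paper, which simply declares $X_1\neq\emptyset$, $Y_1\neq\emptyset$ without comment.
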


\section{Graphs with given number of cut edges}

\begin{lemma}\label{lem5.1}
 Let $G$ be a graph with a cut edge $e=w_1w_2$,  and  $G'$ be the graph obtained from $G$ by contracting edge $e$  and  adding  a pendent edge attaching at the contracting vertex (see Figure 2). If $d_G(w_i)\geq 2$ for $i=1,2$,  we have that  $\rho(G')>\rho(G)$.
  \end{lemma}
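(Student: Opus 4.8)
The plan is to bound $\rho(G')$ from below by evaluating the Rayleigh quotient of $RD(G')$ at a test vector obtained by transplanting the principal eigenvector of $G$ along a natural bijection between the two vertex sets.

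\textbf{Setup.} Since $e=w_1w_2$ is a cut edge, $G-e$ has exactly two components; write $H_1$ for the one containing $w_1$ and $H_2$ for the one containing $w_2$. The hypothesis $d_G(w_i)\ge 2$ means each $w_i$ has a neighbour inside $H_i$, so $A:=V(H_1)\setminus\{w_1\}$ and $B:=V(H_2)\setminus\{w_2\}$ are both nonempty. Passing to $G'$ identifies $w_1,w_2$ into a single vertex $w$ (whose neighbourhood is the disjoint union $N_{H_1}(w_1)\cup N_{H_2}(w_2)$, so $G'$ is simple and has $n$ vertices) and attaches a new pendant vertex $u$ at $w$. There is an obvious bijection $\phi\colon V(G')\to V(G)$ that is the identity on $A$ and on $B$ and satisfies $\phi(w)=w_1$, $\phi(u)=w_2$. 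The construction of $G'$ does not distinguish $w_1$ from $w_2$, so I may relabel and assume $x_{w_1}\ge x_{w_2}$, where $X=(x_v)_{v\in V(G)}$ is the positive unit principal eigenvector of $RD(G)$. Define $Y=(y_v)_{v\in V(G')}$ by $y_v:=x_{\phi(v)}$. Then $\|Y\|=\|X\|=1$, so $\rho(G')\ge Y^\top RD(G')Y$, and since $X^\top RD(G)X=\rho(G)$ it suffices to prove $Y^\top RD(G')Y>\rho(G)$.

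\textbf{Distance comparison.} Set $\gamma_a:=d_G(a,w_1)$ for $a\in A$ and $\delta_b:=d_G(b,w_2)$ for $b\in B$; all of these are at least $1$. Using that every $H_1$--$H_2$ path in $G$ must traverse the bridge $e$, one verifies that $d_{G'}(p,q)=d_G(\phi(p),\phi(q))$ for every pair $p,q\in V(G')$ \emph{except}
\[
d_{G'}(b,w)=\delta_b=d_G(b,w_1)-1 \quad\text{and}\quad d_{G'}(b,u)=\delta_b+1=d_G(b,w_2)+1 \qquad(b\in B),
\]
\[
d_{G'}(a,b)=\gamma_a+\delta_b=d_G(a,b)-1 \qquad (a\in A,\ b\in B).
\]
In words: contracting the bridge shortens by one all distances that used to cross it, with the sole exception that the new pendant $u$ lies one step beyond the old vertex ($w_2$) it replaces, so distances from $B$ to $u$ grow by one.

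\textbf{Conclusion.} Substituting into $Y^\top RD(G')Y-X^\top RD(G)X=\sum_{p\ne q}\bigl(d_{G'}(p,q)^{-1}-d_G(\phi(p),\phi(q))^{-1}\bigr)y_py_q$, all unchanged pairs cancel; grouping each pendant pair $\{b,u\}$ with the pair $\{b,w\}$ and using $\delta_b^{-1}-(\delta_b+1)^{-1}=\bigl(\delta_b(\delta_b+1)\bigr)^{-1}$ gives
\[
Y^\top RD(G')Y-\rho(G)=2\sum_{b\in B}\frac{x_b\,(x_{w_1}-x_{w_2})}{\delta_b(\delta_b+1)}+2\sum_{a\in A}\sum_{b\in B}\frac{x_a x_b}{(\gamma_a+\delta_b)(\gamma_a+\delta_b+1)}.
\]
The first sum is nonnegative because $x_{w_1}\ge x_{w_2}$ and $X>0$, and the second is strictly positive because $A\ne\emptyset$, $B\ne\emptyset$ and $X>0$. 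Hence $\rho(G')\ge Y^\top RD(G')Y>\rho(G)$.

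\textbf{Main obstacle.} The only delicate point is that the pendant pairs $\{b,u\}$ push in the wrong direction. The remedy is exactly the choice encoded in $\phi$: put the larger of the two eigenvector values at the contracted vertex $w$ and the smaller at the pendant $u$. Then, entry by entry, the gain on $\{b,w\}$ dominates the loss on $\{b,u\}$, and the cross pairs $\{a,b\}$ (nonempty thanks to the degree hypothesis) supply a strict surplus; everything else is routine bookkeeping with reciprocal distances.
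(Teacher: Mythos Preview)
Your proof is correct and follows essentially the same route as the paper: transplant the Perron eigenvector of $G$ onto $G'$ (placing the larger of $x_{w_1},x_{w_2}$ at the contracted vertex), compare distances across the bridge, and group the $\{b,w\}$ and $\{b,u\}$ terms so that the loss from the pendant is absorbed while the strictly positive cross--term $\sum_{a\in A,\,b\in B}$ supplies the strict inequality. The only differences are cosmetic---your explicit bijection $\phi$ and parameters $\gamma_a,\delta_b$ make the bookkeeping cleaner than the paper's reuse of the labels $w_1,w_2$, but the underlying argument is identical.
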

\begin{figure}[h,t,b,p]
\begin{center}
\scalebox{0.9}[0.9]{\includegraphics{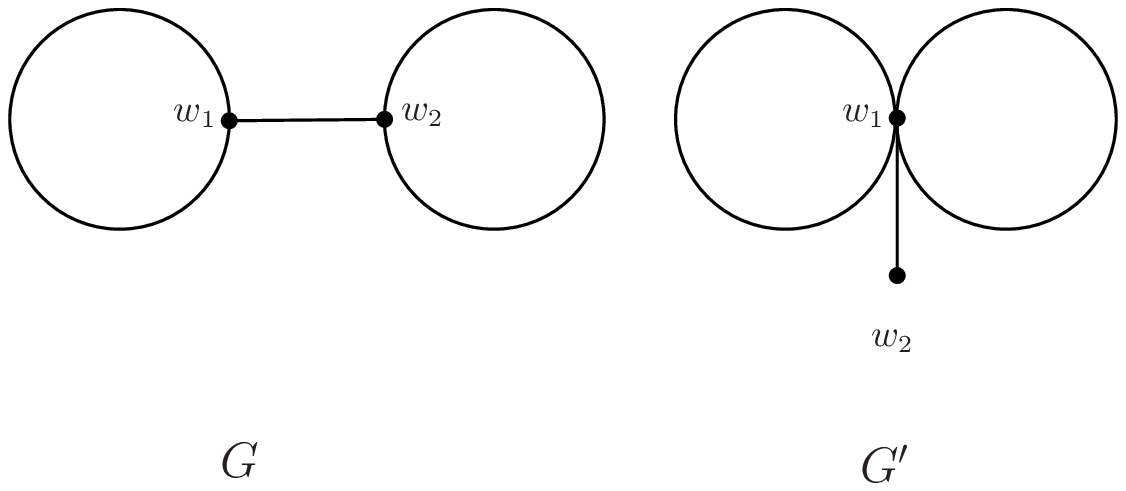}}\\[12pt]
Figure~2. $G$ and $G'$
\end{center}
\end{figure}
  \begin{proof}
 Let $\rho(G)$ be the Harary spectral radius of $G$ and $X$ the  corresponding  principal eigenvector. Without loss of generality, we assume that $x_{w_1}\geq x_{w_2}$. We denote the contracting vertex by $w_1$, and the pendant edge by $w_1w_2$.
 Let $G_i$ be the component of $G-e$ that contains $w_i$ for $i=1,2$. Let $V_1'=V(G_1)\setminus\{w_1\}$ and $V_2'=V(G_2)\setminus\{w_2\}$.
 For any two vertices $u$ and $v$, we have that

 \begin{equation*}
 d_{G'}(u,v)=
\left\{
  \begin{array}{ll}
    d_{G}(u,v)-1, & \hbox{if $u\in V(G_1)$ and $v\in V_2'$,} \\
    d_{G}(u,v)+1,& \hbox{if $u=w_2$ and $v\in V_2'$},\\
   d_G(u,v), & \hbox{otherwise.}
  \end{array}
\right.
\end{equation*}

 Let
 $$
 A=\sum_{w_1\in V_1', w_2\in V_2'}(\frac{1}{d_{G'}(w_1,w_2)}-\frac{1}{d_{G}(w_1,w_2)})x_{w_1}x_{w_2}>0.
 $$

 From the definition of Harary matrix, we know that
\begin{eqnarray*}
  \rho(G')-\rho(G) &\geq & X^T RD(G')X- X^TRD(G))X \\
  &=&\sum_{u,v \in V(G)}(\frac{1}{d_{G'}(u,v)}-\frac{1}{d_{G}(u,v)})x_{u}x_{v}\\
  &=& 2A+2\sum_{u=w_1,v\in  V_2'}(\frac{1}{d_{G'}(u,v)}-\frac{1}{d_{G}(u,v)})x_{u}x_{v}\\
  &&+2\sum_{u=w_2, v\in V_2'}(\frac{1}{d_{G'}(u,v)}-\frac{1}{d_{G}(u,v)})x_{u}x_{v}\\
  &=& 2A+2 x_{v}\sum_{v\in  V_2'}(\frac{x_{w_1}}{d_{G}(w_1,v)(d_{G}(w_1,v)-1)} -\frac{x_{w_2}}{d_{G}(w_2,v)(d_{G}(w_2,v)+1)})\\
  &=& 2A+2(x_{w_1}-x_{w_2})x_{v}\sum_{v\in  V_2'}\frac{1}{d_{G}(w_1,v)(d_{G}(w_1,v)-1)}\\
  &\geq& 2A>0.
\end{eqnarray*}
Note that the last equality holds since $d_{G}(w_1,v)=d_{G}(w_2,v)+1$ for any   $v\in  V_2'$.
Hence we have our conclusion.
\end{proof}

Assume that $r_1, r_2,\cdots, r_s$  are positive integers, and $s\leq t$. Let $K_t(r_1, r_2,\cdots, r_s)$  be the graph that is obtained from $K_t$ with $V(K_t)=\{v_1,v_2,\cdots,v_t\}$  by  attaching $r_i$ pendant edges to vertex  $v_i$ for $1\leq i\leq s$.

\begin{lemma}\label{lem5.2}
 Let $G=K_t(r_1, r_2,\cdots, r_s)$ and $G'=K_t(r_1+ r_2+\cdots+ r_s)$. Then  $\rho(G')>\rho(G)$.
  \end{lemma}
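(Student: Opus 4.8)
The plan is to compare $G = K_t(r_1,\dots,r_s)$ and $G' = K_t(r)$ with $r = r_1 + \cdots + r_s$ by exhibiting $G$ as obtainable from $G'$ through a sequence of ``pendant-sliding'' moves and showing each move strictly increases $\rho$; or, more efficiently, by a single Rayleigh-quotient comparison using the principal eigenvector of $G$. Since the total number of pendant vertices and the clique $K_t$ are the same in both graphs, $G$ and $G'$ have the same vertex set, and $RD(G') - RD(G)$ is a matrix supported only on pairs of vertices whose distance changes. Concretely, moving all pendant edges to a single vertex $v_1$ of $K_t$ turns distances between two pendant vertices hanging off \emph{different} clique vertices from $3$ down to $2$, and turns the distance from a relocated pendant vertex to a clique vertex it is no longer attached to from $2$ down to... wait, it stays $2$; the only distances that change are (a) pendant–pendant distances and (b) the distance from a pendant vertex to its old versus new clique-attachment vertex.

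First I would set up the principal eigenvector $X$ of $RD(G)$. By Lemma~\ref{lem2} all pendant vertices attached to the same clique vertex share a common value, and all clique vertices of $K_t$ that carry no pendant edge share a common value; label the clique-vertex weights $u_1,\dots,u_s$ (for $v_1,\dots,v_s$) and $u_0$ (for each of the $t-s$ bare clique vertices), and the pendant weights $z_1,\dots,z_s$. Then I would use Corollary~\ref{co1} together with the fact that $K_{t+1}$ (or better, a suitable complete subgraph) sits inside $G$ to get a usable lower bound $\rho(G) > t$, or more precisely $\rho(G) \ge \rho(K_t) = t-1$, which is what makes the eigenvalue-equation manipulations go through with the right sign. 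Next, using $X$ as a test vector for $RD(G')$ gives $\rho(G') - \rho(G) \ge X^T\big(RD(G') - RD(G)\big)X$, and I would compute the right-hand side explicitly: it is a sum of nonnegative terms $\big(\tfrac12 - \tfrac13\big)$ times products $z_i z_j$ over pendants on distinct clique vertices (these become distance $2$ instead of $3$ after consolidation) plus sign-indefinite terms from pendant-to-clique-vertex distances, which I expect to reorganize into something manifestly positive, exactly as in the proofs of Lemmas~\ref{lem3} and~\ref{lem4}.

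The cleanest route, I think, is actually induction on $s$ via Lemma~\ref{lem5.1}: that lemma already says contracting a cut edge $w_1w_2$ (with both endpoints of degree $\ge 2$) and re-attaching a pendant strictly increases $\rho$. If $s \ge 2$, pick a clique vertex $v_2$ with $r_2 \ge 1$ pendant neighbors; the edges from $v_2$ to its pendants are cut edges but their pendant endpoints have degree $1$, so Lemma~\ref{lem5.1} does not apply directly. Instead I would argue: among all graphs $K_t(r_1,\dots,r_s)$ with fixed $t$ and fixed pendant total $r$, the one maximizing $\rho$ must have all pendants on one vertex, because otherwise a Rayleigh-quotient swap (move one pendant from the clique vertex of smaller $X$-weight to the clique vertex of larger $X$-weight, à la Lemma~\ref{lem3}'s ``$y_1 \le y_2$'' argument) strictly increases $\rho$ — this requires first establishing via the eigenvalue equations that the clique vertex carrying more pendants has the larger weight, which in turn uses $\rho(G) > t$.

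The main obstacle I anticipate is handling the sign-indefinite cross terms in $X^T(RD(G')-RD(G))X$ coming from pendant–clique-vertex pairs: when a pendant is moved from $v_i$ to $v_1$, its distance to $v_i$ goes from $1$ to $2$ (a \emph{decrease} in reciprocal distance, hence an unfavorable term $-\tfrac12 z_i u_i$), while its distance to $v_1$ goes from $2$ to $1$ (a favorable $+\tfrac12 z_i u_1$); to conclude I must show the favorable pendant–pendant gains plus the net clique-side change is positive, and this is precisely where the ordering $u_1 \ge u_i$ (the recipient clique vertex has the largest weight) and the lower bound on $\rho$ must be combined — just as the term $tz((k-1)y-z)$ in \eqref{6} was shown positive in Lemma~\ref{lem4} by substituting the eigenvector ratio \eqref{7} and invoking $\rho > 2t$. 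I expect the final inequality to reduce, after substituting a ratio like $z_i/u_i$ obtained from the eigenvalue equations, to a strict inequality of the form $\rho(G) > (\text{something} \le t)$, which Corollary~\ref{co1} supplies.
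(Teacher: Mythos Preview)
Your proposal is on the right track and would ultimately succeed, but you are anticipating a difficulty that does not actually arise, and this leads you toward unnecessary machinery. The paper's proof carries out exactly the step-by-step Rayleigh-quotient comparison you outline, but with two simplifications you miss.

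First, the choice of recipient vertex is free: one simply relabels so that $x_{v_1}\ge x_{v_i}$ for all $2\le i\le s$. There is nothing to prove here via the eigenvalue equations, and no lower bound on $\rho$ is ever invoked.

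Second, the paper moves \emph{all} pendants from $v_2$ to $v_1$ in one step, producing $G''=K_t(r_1+r_2,r_3,\dots,r_s)$, rather than one pendant at a time. This is the key point. With the whole block $R_2$ moved at once, the only distance changes for $u\in R_2$ are: $d(u,v_1)$ drops from $2$ to $1$, $d(u,v_2)$ rises from $1$ to $2$, and $d(u,w)$ drops from $3$ to $2$ for each $w\in R_1$; everything else (including distances within $R_2$ and to $R_3,\dots,R_s$) is unchanged. Hence
\[
X^T\big(RD(G'')-RD(G)\big)X \;=\; r_2 a_2\,(x_{v_1}-x_{v_2}) \;+\; \tfrac{1}{3}\,r_1 r_2\, a_1 a_2,
\]
where $a_i$ is the common pendant weight at $v_i$. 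The first term is $\ge 0$ by the relabeling, the second is strictly positive since $r_1\ge 1$, and iterating finishes the proof.

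The complication you foresee --- needing an eigenvector ratio and a bound like $\rho(G)>t$, as in Lemma~\ref{lem4} --- arises precisely because you contemplate moving one pendant at a time. That variant introduces an extra negative term $-\tfrac{1}{6}(r_2-1)a_2^2$ (the moved pendant recedes from its former siblings still at $v_2$), and cancelling it would indeed require the eigenvalue-equation bookkeeping you describe. Moving the whole group at once sidesteps this entirely.
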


\begin{proof}
 Let $\rho(G)$ be the Harary spectral radius of $G$ and $X$ the  corresponding  principal eigenvector. Let $R_i$ be set of pendant vertices that is adjacent to $v_i$ in $G$.  From Lemma \ref{lem2}, we can suppose that  $x_u=a_i$ for all $u\in R_i$ ($1\leq i\leq s$).
  Without loss of generality, assume that $x_{v_1}\geq x_{v_i}$ for $2\leq i\leq s$.
Let $G''=G-\{v_2w:w\in R_2\}+\{v_1w:w\in R_2\}$, that is,  $G''=K_t(r_1+r_2, r_3,\cdots, r_s)$.
 For any two vertices $u$ and $v$, if neither $u$ nor $v$ belongs to $R_2$, we know that $d_G(u,v)=d_{G''}(u,v)$; If both $u$ and  $v$ belong to $R_2$, we can also get $d_G(u,v)=d_{G''}(u,v)$. If exactly one of $u$ and  $v$ belongs to $R_2$, say $u\in R_2$, we have the following equation.

 \begin{equation*}
d_{G''}(u,v)=
\left\{
  \begin{array}{ll}
    d_{G}(u,v)-1=2, & \hbox{if $v\in R_1$,} \\
    d_{G}(u,v)-1=1,& \hbox{if $v=v_1$}\\
    d_{G}(u,v)+1=2, & \hbox{if $v=v_2$,} \\
   d_G(u,v), & \hbox{otherwise.}
  \end{array}
\right.
\end{equation*}

 From the definition of Harary matrix, we know that
\begin{eqnarray*}
  \rho(G'')-\rho(G) &\geq & X^T RD(G'')X- X^TRD(G))X \\
  &=&\sum_{u,v \in V(G)}(\frac{1}{d_{G''}(u,v)}-\frac{1}{d_{G}(u,v)})x_{u}x_{v}\\
  &=&2 \sum_{u\in R_2,v \notin R_2}(\frac{1}{d_{G''}(u,v)}-\frac{1}{d_{G}(u,v)})x_{u}x_{v}\\
  &=& 2r_2a_2\big(\sum_{v\in R_1}(\frac{1}{2}-\frac{1}{3})x_{v}+
 (1-\frac{1}{2})x_{v_1}+(\frac{1}{2}-1)x_{v_2}\big)\\
 &=& \frac{1}{3}r_1r_2a_1a_2+
  r_2a_2(x_{v_1}-x_{v_2})\\
 &>&0.
\end{eqnarray*}
By repeating this process until all the pendant edges have a common end, we can obtain our conclusion.
\end{proof}

From Lemma \ref{lem1}, Lemma \ref{lem5.1} and Lemma \ref{lem5.2}, we have the following theorem.

\begin{theorem}
Let $G$  be a graph  on $n$ vertices with $p$ cut edges which has the maximum Harary spectral radius, then $G=K_{n-p}(p)$.
\end{theorem}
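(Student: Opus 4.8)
The plan is to show that among all connected graphs on $n$ vertices with exactly $p$ cut edges, the graph $K_{n-p}(p)$ — the clique $K_{n-p}$ with $p$ pendant edges attached at a single vertex — is forced by successive local modifications, each of which strictly increases the Harary spectral radius while preserving the number of cut edges. Let $G$ be an extremal graph. First I would analyze the block structure of $G$. A cut edge lies in a block that is a single edge ($K_2$), so removing all $p$ cut edges decomposes $G$ into $p+1$ or fewer "pieces"; more precisely, consider the block-cut tree. The key structural observation is that each non-trivial block (a maximal $2$-connected subgraph) should be complete: if some block $B$ is $2$-connected but not complete, there exist non-adjacent $u,v\in V(B)$, and adding $uv$ by Lemma~\ref{lem1} strictly increases $\rho$ without changing which edges are cut edges (adding an edge inside a $2$-connected block creates no new cut edge and destroys none), contradicting maximality. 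So every non-trivial block of $G$ is a clique.

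Next I would show $G$ has exactly one non-trivial block. Suppose $G$ has two non-trivial blocks $B_1,B_2$ (each a clique of size $\geq 3$, or size exactly $2$ — but a $K_2$ block is itself a cut edge, so I should be careful). The structure is: a tree of cliques joined by cut edges. Apply Lemma~\ref{lem5.1}: take a cut edge $e=w_1w_2$ with $d_G(w_i)\geq 2$ for both $i$ — this exists precisely when there are two non-trivial blocks or, more generally, when the cut edge is not "pendant-like." Contracting $e$ and re-attaching a pendant edge keeps the cut-edge count (the contracted edge is replaced by the new pendant edge, which is a cut edge; no other edge changes status) and strictly increases $\rho$. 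Iterating, every cut edge with both endpoints of degree $\geq 2$ is eliminated, so all $p$ cut edges become pendant edges attached to a single clique $K_t$. This forces $G = K_t(r_1,\ldots,r_s)$ for some distribution of the $p = r_1+\cdots+r_s$ pendant edges among clique vertices, with $t = n-p$.

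Finally I would apply Lemma~\ref{lem5.2}: among all $K_t(r_1,\ldots,r_s)$ with $\sum r_i = p$ fixed, the one maximizing $\rho$ is $K_t(p)$, where all pendant edges attach at a single vertex. Combined with the fact that $K_t(p)$ indeed has exactly $p$ cut edges (the $p$ pendant edges) and $n = t+p$ vertices, this yields $G = K_{n-p}(p)$.

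The main obstacle I anticipate is the bookkeeping in Step~2: carefully verifying that each application of Lemma~\ref{lem5.1} neither creates nor destroys cut edges other than the expected swap, and that the process terminates at a configuration where no cut edge has both endpoints of degree $\geq 2$ — equivalently, every cut edge is pendant. One must argue that when no such contraction is available, the block-cut tree is a star of one clique with pendant edges, using that every non-trivial block is complete (from Step~1) and that two cliques cannot be joined directly by a shared cut-vertex-free path without the connecting edge having both endpoints of degree $\geq 2$. A clean way to phrase the termination is by a monovariant (e.g., the sum of distances, or the number of edges incident to degree-$\geq 2$ endpoints among cut edges strictly decreases), though since each step strictly raises $\rho$ and there are finitely many graphs, termination is automatic; the real content is that the terminal graph has the claimed form.
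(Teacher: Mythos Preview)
Your proposal is correct and follows essentially the same route as the paper: the paper's proof is the single sentence ``From Lemma~\ref{lem1}, Lemma~\ref{lem5.1} and Lemma~\ref{lem5.2}, we have the following theorem,'' and your three steps are precisely the way those three lemmas combine. One small remark: you phrase Step~2 as an iteration (repeatedly contract non-pendant cut edges), but since you are arguing by extremality you only need a single application of Lemma~\ref{lem5.1} to derive a contradiction, so the worry about the merged block no longer being complete after a contraction never actually arises---in the extremal $G$ all cut edges are already pendant, and then Lemma~\ref{lem1} forces the remaining $2$-edge-connected part to be $K_{n-p}$.
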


\begin{coro}
The $n$-vertex star $S_n$ is the unique tree on $n$ vertices which has the maximum Harary spectral radius.
\end{coro}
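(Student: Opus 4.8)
The plan is to deduce this corollary directly from Theorem~5.1 by restricting the ambient family of graphs to trees. The key elementary observation is that every edge of a tree is a cut edge: if $T$ is a tree on $n$ vertices, then $T$ has exactly $n-1$ edges and each of them is a bridge, since deleting any edge of $T$ disconnects it. Consequently the set of all trees on $n$ vertices is a subfamily of the family of connected $n$-vertex graphs having exactly $p:=n-1$ cut edges.

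Next I would invoke Theorem~5.1 with this value of $p$. It asserts that the $n$-vertex graph with $p$ cut edges of maximum Harary spectral radius is $K_{n-p}(p)$; substituting $p=n-1$ yields $K_{1}(n-1)$, the graph obtained from the one-vertex complete graph $K_1$ by attaching $n-1$ pendant edges to its unique vertex. (The notation $K_t(r_1,\dots,r_s)$ requires $s\le t$; here $s=1\le t=1$, so this instance is legitimate.) But $K_1(n-1)$ is precisely the star $S_n$, and $S_n$ is itself a tree. Hence $S_n$ attains the maximum Harary spectral radius over the \emph{whole} family of $n$-vertex graphs with $n-1$ cut edges, and a fortiori over the subfamily consisting of trees; since the maximizer identified in Theorem~5.1 is unique, $S_n$ is the unique tree on $n$ vertices with maximum Harary spectral radius.

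There is essentially no obstacle here beyond bookkeeping: one need only verify the two trivial facts that a tree on $n$ vertices has exactly $n-1$ cut edges and that $K_1(n-1)=S_n$, and then observe that the uniqueness asserted in Theorem~5.1 automatically transfers to any subfamily that still contains the extremal graph. (If desired, one could additionally record the converse statement — that a connected $n$-vertex graph has at most $n-1$ cut edges, with equality exactly for trees, as seen by contracting each maximal $2$-edge-connected subgraph and observing that the resulting graph is a tree whose edges are precisely the bridges of $G$ — but this stronger fact is not needed for the corollary.)
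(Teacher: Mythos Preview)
Your argument is correct and matches the paper's intended derivation: the corollary is stated in the paper without proof, as an immediate specialization of the preceding theorem (Theorem~5.3 in the paper's numbering, not 5.1) to the case $p=n-1$, using exactly the observations that every edge of a tree is a cut edge and that $K_1(n-1)=S_n$. One could even note that the class of connected $n$-vertex graphs with exactly $n-1$ cut edges \emph{coincides} with the class of trees, so no ``subfamily'' restriction is actually needed; but your version is already sufficient.
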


\end{document}